\documentclass[10pt]{article}
\usepackage{graphicx}
\usepackage{makeidx}

\setlength{\textwidth}{6in}
\setlength{\textheight}{8.70in}

\usepackage{amsmath}
\usepackage{amsthm}
\usepackage{amsfonts}
\usepackage{amssymb}
\usepackage{epsfig}
\usepackage{tabularx}
\usepackage{latexsym}
\newtheorem{lemma}{Lemma}

\def\qed{$\Box$}

\date{}
%\pagestyle{empty}

%..................Beginning of The Paper......................

\begin{document}

\title{Graph-theoretic perspective on a special class of Steiner Systems}
\author{
Jithin Mathews 
\footnote{Alumnus, IIT Guwahati, Assam-781039,
India. Email: m.jithin@iitg.ernet.in} 
}
\maketitle

%.................Start of Abstract.........................

\begin{abstract}
We study $S(t-1,t,2t)$, which is a special class of Steiner systems. Explicit constructions for designing such systems are developed under a graph-theoretic platform where Steiner systems are represented in the form of uniform hypergraphs. The constructions devised are then used to study the $2$-coloring properties of these uniform hypergraphs.
\vspace{1mm}

\noindent \textbf{Keywords:} Steiner System; Uniform Hypergraph; Hypergraph Coloring
\end{abstract}

\section{Introduction}
\label{introduction}

A Steiner system, commonly denoted as $S(t,k,n)$, is a combinatorial design with a set $V$ of $n$ points, and a collection of subsets of $V$ of size $k$ (called blocks), such that any $t$-subset of $V$ is contained in exactly one of the blocks. The study of Steiner systems and their generalizations is closely related to several areas of mathematics, such as geometry, group theory and coding theory. In addition, Steiner systems have important applications in computer science and cryptography \cite{col2}. A lot of effort has been spend on designing particular Steiner systems. For a survey on the known Steiner systems, the reader is referred to \cite{col1}.

Constructing general Steiner systems is one of the most important problems in design theory. A finite projective plane $S(2,q+1,q^2+q+1)$ and a finite affine plane $S(2, q, q^2)$ \cite{col1} are examples for such systems. Here we study on a special class of Steiner systems, $viz.$, $S(t-1,t,2t)$. Only two such non-trivial ($t>2$) systems are known to exist: $S(3,4,8)$, and $S(5,6,12)$ which is the famous Mathieu group $M_{12}$ \cite{math}. An alternative construction for $M_{12}$ can be obtained by the use of the `kitten' of R.T. Curtis \cite{curt}. These systems can be used to construct $S(2,3,7)$ and $S(4,5,11)$, respectively. It becomes possible due to the fact that if a Steiner system $S(t,k,n)$ exists, then taking all blocks containing a specific element and discarding that element from them produces a derived Steiner system $S(t-1,k-1,n-1)$. Therefore, the existence of $S(t-1,k-1,n-1)$ is a necessary condition for the existence of $S(t,k,n)$.

In this note, we give explicit constructions for designing the two known Steiner systems that comes under the previously mentioned class of general systems: $S(t-1,t,2t)$. For the same, we follow a graph-theoretic approach where Steiner systems are represented in the form of $t$ uniform hypergraphs. These constructions are then used to study the $2$-coloring properties of the hypergraphs designed. 

\section{Constructing $S(t-1,t,2t)$}
We define the Steiner system $S(t-1,t,2t)$ as $t$-uniform hypergraphs, $ie.,$ hypergraphs where every hyperedge contains exactly $t$ vertices. We denote these hypergraphs as $S_{t}^{2t}$ and construct them  for $t=6$ and $t=4$. The vertex set and the hyperedge set of $S_{t}^{2t}$ is denoted by $V_{t}^{2t}$ and $E_{t}^{2t}$, respectively, where $V_{t}^{2t}$ = $\{v_1, v_2, v_3, \ldots, v_{2t-1}, v_{2t}\}$.

The following three steps describes a high-level overview of the significant steps in the design of $S(t-1,t,2t)$:

\noindent$(i)$ \textit{Existence of complementary hyperedges:} 
The complementary hyperedge $h'$ corresponding to a particular $t$-uniform hyperedge $h$ is defined by $V_{t}^{2t} \setminus h$. The existence of these complementary hyperedges is a mandatory condition in this design. For example, the hyperedge $\{v_1, v_2, v_3, v_4, v_5, v_6\}$ will be present in the hypergraph $S_{6}^{12}$ if and only if its complementary hyperedge $\{v_7, v_8, v_9, v_{10}, v_{11}, v_{12}\}$ is also present in it. We begin the construction of $S_{t}^{2t}$ by adding a $t$-uniform hyperedge $h$ into the hyperedge set $E_{t}^{2t}$ along with its complementary hyperedge $h'$. Note that, $h$ which is the first hyperedge added, can be any $t$-subset of $V_{t}^{2t}$.

\noindent$(ii)$ \textit{The non-trivial hyperedge formation:} Here we design all the hyperedges that intersect  exactly on $(t-2)$ vertices of $h$ containing a particular vertex $v$, $v \in h$. Here we define the non-trivial steps of our construction. There are two non-trivial steps in the design of the hypergraph $S_{6}^{12}$ which represents the Steiner system $S(5,6,12)$, while only a single such step is present in the design of the hypergraph $S_{4}^{8}$.

\noindent$(iii)$ \textit{The trivial hyperedge formation:} In this step, hyperedges are formed naturally using the definition of the Steiner system $S(t-1,t,2t)$ that every $(t-1)$ subset of $V$ must occur exactly once.

Having defined an overview for our design, let us now devise the explicit constructions for both $S(5,6,12)$ and $S(3,4,8)$.

\subsection{Constructing $S_{t}^{2t}$ when $t=6$}
As mentioned before, we call the hypergraph corresponding to the Steiner system $S(5,6,12)$ as $S_{6}^{12}$. We begin with an empty hyperedge-set of $S_{6}^{12}$, denoted by $E_{6}^{12}$, and its vertex-set is denoted by $V_{6}^{12}$ = $\{v_1, v_2, v_3, \ldots, v_{11},$ $v_{12}\}$. Now let us start devising the hyperedges for the hypergraph $S_{6}^{12}$:\\

\noindent $\textbf{Step (1)}$ Let the hyperedge $\{v_1, v_2, v_3, v_4, v_5, v_6\}$ be denoted by $h$ and its
complementary hyperedge be $h'$, and $h'$ = $\{v_7, v_8, v_9, v_{10}, v_{11}, v_{12}\}$.
We start the construction by adding the hyperedges $h$ and $h'$ into the hyperedge-set $E_{6}^{12}$.\\

\noindent $\textbf{Step (2)}$ Here we devise and add to $E_{6}^{12}$, all the hyperedges that intersect  exactly on $4$ vertices of $h$ containing a particular vertex, and $w.l.o.g.$ we choose $v_1$ to be that vertex. Note that for every hyperedge added to $E_{6}^{12}$, we also add its corresponding complementary hyperedge. The following two sub-steps defines certain sets that are useful in our construction:\\

$\textbf{Step (2.1)}$ If set $A$ = $\{1', 2', 3', x\}$ and set $B$ = $\{\{a, b\}, \{c, d\}, \{e, f\} \}$, then,\\

$\hspace{18mm}$ for set $A'$ = $\{1', 2', 3', y\}$, set $B'$ = $\{\{a, c\}, \{b, e\}, \{d, f\} \}$, and,

$\hspace{18mm}$ for set $A''$ = $\{1', 2', 3', z\}$, set $B''$ = $\{\{a, f\}, \{b, d\}, \{c, e\} \}$.\\ 

$\textbf{Step (2.2)}$ If set $A$ = $\{1', 2', 3', x\}$ and set $B$ = $\{\{a, b\}, \{c, d\}, \{e, f\} \}$, and,

$\hspace{18mm}$ if set $A'$ = $\{1', 2', 3', y\}$ and set $B'$ = $\{\{a, c\}, \{b, e\}, \{d, f\} \}$, then,\\

$\hspace{18mm}$ for set $A''$ = $\{1', 2', x, y\}$, set $B''$ = $\{\{a, e\}, \{b, d\}, \{c, f\} \}$, and,

$\hspace{18mm}$ for set $A'''$ = $\{1', 3', x, y\}$, set $B'''$ = $\{\{a, f\}, \{b, c\}, \{d, e\} \}$.\\

In order to apply these steps, we need to map the elements of the above sets to the elements of $V_{6}^{12}$. Note the following vertex-mapping:\\
$(1'\rightarrow v_1)$, $(2'\rightarrow v_2)$, $(3'\rightarrow v_3)$, $(x\rightarrow v_4)$, $(y\rightarrow v_5)$, $(z\rightarrow v_6)$, $(a\rightarrow v_7)$, $(b\rightarrow v_8)$, $(c\rightarrow v_9)$, $(d\rightarrow v_{10})$, $(e\rightarrow v_{11})$ and $(f\rightarrow v_{12})$.\\\\
Now, apply Step $(2.1)$ on these mapped elements to get the following sets:\\
$A$ = $\{v_1, v_2, v_3, v_4\}$ and $B$ = $\{\{v_7, v_8\}, \{v_9, v_{10}\}, \{v_{11}, v_{12}\} \}$,\\
$A'$ = $\{v_1, v_2, v_3, v_5\}$ and $B'$ = $\{\{v_7, v_9\}, \{v_8, v_{11}\}, \{v_{10}, v_{12}\} \}$, 
and,\\
$A''$ = $\{v_1, v_2, v_3, v_6\}$ and $B''$ = $\{\{v_7, v_{12}\}, \{v_8, v_{10}\}, \{v_9, v_{11}\} \}$.\\

Perform the Cartesian product $A \times B$, and add the three $6$-uniform hyperedges formed, along with their complementary hyperedges, into the hyperedge-set $E_{6}^{12}$. Similarly, perform $A' \times B'$ and $A'' \times B''$ and add the hyperedges thus formed (along with their corresponding complementary hyperedges) to $E_{6}^{12}$. Note that here we have added $18$ hyperedges to $E_{6}^{12}$.

In order to form the next few hyperedges we have to apply the Step $(2.2)$. We use the previous vertex-mapping for the same. After applying Step $(2.2)$ we get the following sets:\\\\
$A$ = $\{v_1, v_2, v_3, v_4\}$ and $B$ = $\{\{v_7, v_8\}, \{v_9, v_{10}\}, \{v_{11}, v_{12}\} \}$,\\
$A'$ = $\{v_1, v_2, v_3, v_5\}$ and $B'$ = $\{\{v_7, v_9\}, \{v_8, v_{11}\}, \{v_{10}, v_{12}\} \}$,\\
$A''$ = $\{v_1, v_2, v_4, v_5\}$ and $B''$ = $\{\{v_7, v_{11}\}, \{v_8, v_{10}\}, \{v_9, v_{12}\} \}$, and,\\
$A'''$ = $\{v_1, v_3, v_4, v_5\}$ and $B'''$ = $\{\{v_7, v_{12}\}, \{v_8, v_9\}, \{v_{10}, v_{11}\} \}$.

Perform $A'' \times B''$ and $A''' \times B'''$, and add the hyperedges thus formed (along with their corresponding complementary hyperedges) to $E_{6}^{12}$. Note that here we have added $12$ more hyperedges to $E_{6}^{12}$.
 
For creating all the remaining hyperedges that intersect exactly on $4$ vertices of $h$ containing the vertex $v_1$ we need to use Step $(2.1)$. For the same, we choose two sets from the previously created sets (sets with their elements as the vertices of $S_{6}^{12}$) $A$, $A'$, $A''$ and $A'''$, along with their corresponding $B$, $B'$, $B''$ and $B'''$. These sets are then mapped to the sets $A$ and $A'$ and to their corresponding sets $B$ and $B'$ which are defined in the Step $(2.1)$ in-order to create a new mapping for the vertices of $V_{6}^{12}$. This helps us to get a new $A''$ and a new $B''$ (of Step $(2.1)$) that can aid us in the formation of more hyperedges.
For example, mapping the previously defined sets $A$, $B$ and $A'''$, $B'''$ to the sets $A$, $B$ and $A'$, $B'$, respectively, of the Step $(2.1)$, we get the following mapping to the vertices of $V_{6}^{12}$:\\\\
$(1'\rightarrow v_1)$, $(2'\rightarrow v_2)$, $(3'\rightarrow v_4)$, $(x\rightarrow v_3)$, $(y\rightarrow v_5)$, $(a\rightarrow v_7)$, $(b\rightarrow v_8)$, $(c\rightarrow v_{11})$, $(d\rightarrow v_{12})$, $(e\rightarrow v_{10})$, $(f\rightarrow v_9)$ and $(z\rightarrow v_6)$.\\

\noindent The following sets are formed on applying the above mapping on Step $(2.1)$:\\\\
$A$ = $\{v_1, v_2, v_4, v_3\}$ and $B$ = $\{\{v_7, v_8\}, \{v_{11}, v_{12}\}, \{v_{10}, v_9\} \}$,\\
$A'$ = $\{v_1, v_2, v_4, v_5\}$ and $B'$ = $\{\{v_7, v_{11}\}, \{v_8, v_{10}\}, \{v_{12}, v_9\} \}$, and,\\
$A''$ = $\{v_1, v_2, v_4, v_6\}$ and $B''$ = $\{\{v_7, v_9\}, \{v_8, v_{12}\}, \{v_{11}, v_{10}\} \}$.

Perform $A'' \times B''$ and add the hyperedges formed (along with their corresponding complementary hyperedges) to $E_{6}^{12}$. Here we have added $6$ more hyperedges to $E_{6}^{12}$. 

Using the same method one can create more vertex-mappings for the vertices of $V_{6}^{12},$ and each such mapping will give us a new version of the sets $A''$ and $B''$. Let us see another example: mapping the previously defined sets $A'$, $B'$ and $A'''$, $B'''$ to the sets $A$, $B$ and $A'$, $B'$, respectively, of the Step $(2.1)$, we get the following mapping to the vertices of $V_{6}^{12}$:\\\\
$(1'\rightarrow v_1)$, $(2'\rightarrow v_2)$, $(3'\rightarrow v_5)$, $(x\rightarrow v_3)$, $(y\rightarrow v_4)$, $(a\rightarrow v_7)$, $(b\rightarrow v_9)$, $(c\rightarrow v_{11})$, $(d\rightarrow v_8)$, $(e\rightarrow v_{12})$, $(f\rightarrow v_{10})$ and $(z\rightarrow v_6)$.\\

\noindent The following version of sets $A''$ and $B''$ are formed on applying the above vertex-mapping on Step $(2.1)$:\\\\
$A''$ = $\{v_1, v_2, v_5, v_6\}$ and $B''$ = $\{\{v_7, v_{10}\}, \{v_9, v_8\}, \{v_{11}, v_{12}\} \}$.

As in the previous case, perform $A'' \times B''$ and add the hyperedges formed (along with their corresponding complementary hyperedges) to $E_{6}^{12}$. Note that here we have added $6$ more hyperedges to $E_{6}^{12}$.

Following the same technique, one can create more such vertex-mappings that forms different versions for sets $A''$ and $B''$ on applying the Step $(2.1)$, and the Cartesian product of these different versions of $A''$ and $B''$ will produce hyperedges that are useful for our design. Three more such versions of $A''$ and $B''$ are created in this step of our construction:\\

\noindent $A''$ = $\{v_1, v_3, v_4, v_6\}$ and $B''$ = $\{\{v_7, v_{10}\}, \{v_8, v_{11}\}, \{v_9, v_{12}\} \}$.\\
$A''$ = $\{v_1, v_3, v_5, v_6\}$ and $B''$ = $\{\{v_7, v_{11}\}, \{v_9, v_{10}\}, \{v_8, v_{12}\} \}$.\\
$A''$ = $\{v_1, v_4, v_5, v_6\}$ and $B''$ = $\{\{v_7, v_8\}, \{v_{11}, v_9\}, \{v_{10}, v_{12}\} \}$.\\
 
Performing the Cartesian product of each $A''$ with its corresponding $B''$ will give us more $6$-uniform hyperedges. Add them into $E_{6}^{12}$ along with their corresponding complementary hyperedges, and we have added a total of $18$ hyperedges here.

This completes the Step $(2)$ of constructing $S_{6}^{12}$, where we have added all the hyperedges that intersect exactly on $4$ vertices of $h$ containing the vertex $v_1$ along with their corresponding complementary hyperedges. In this step, a total of $60$ hyperedges are added into the hyperedge-set $E_{6}^{12}$.\\

\noindent $\textbf{Step (3)}$
Here we devise and add to $E_{6}^{12}$, all the hyperedges that intersect  exactly on $3$ vertices of $h$, or, exactly on $2$ vertices of $h$, that contains the vertex $v_1$. For every hyperedge added to $E_{6}^{12}$, we also add its corresponding complementary hyperedge. In this step, hyperedges can be formed quite naturally using the fundamental property of a Steiner system $S(t-1,t,2t)$ that every $(t-1)$ subset of $V$ must occur exactly once.\\

$\textbf{(a)}$ We take all the triples (or $3$-subsets) from the set $h$ that contains the vertex $v_1$ and add to it $3$ vertices from the set $h'$ in-order to form $6$-uniform hyperedges such that all the $5$-subsets formed from the addition of any $3$ vertices of $h$ (containing $v_1$) with $2$ vertices of $h'$ occur exactly once. Consider a $3$-subset $\{v_1, b, c\}$ of $h$. One can easily verify that in Step $(2)$, during the formation of hyperedges, the triple $\{v_1, b, c\}$ is connected with $9$ different $2$-subsets (or pairs) of $h'$ to form $9$ different $5$-subsets exactly once. The total number of pairs in $h'$ is equal to ${6 \choose 2} = 15$. Therefore, we need to construct new hyperedges such that the remaining six $5$-subsets containing the triple $\{v_1, b, c\}$ and a pair from the set $h'$ are formed exactly once. In fact, it is possible to satisfy this condition.

For example, consider the triple $\{v_1, v_2, v_3\}$ of $h$. The $6$ remaining pairs of $h'$ that are to be connected with $\{v_1, v_2, v_3\}$ to form hyperedges are $\{v_7, v_{10}\}$, $\{v_{10}, v_{11}\}$, $\{v_7, v_{11}\}$, $\{v_8, v_9\}$, $\{v_8, v_{12}\}$ and $\{v_9, v_{12}\}$. Note that, in Step $(2)$ all the remaining pairs of $h'$ have been added to the triple $\{v_1, v_2, v_3\}$ for the formation of
hyperedges. Hence, we have $A$ = $\{v_1, v_2, v_3\}$ and $B$ = $\{\{v_7, v_{10}, v_{11}\}, \{v_8, v_9, v_{12}\} \}$. Perform $A\times B$ and add the $2$ hyperedges formed (along with their corresponding complementary hyperedges) into $E_{6}^{12}$. In the same way, one can devise more hyperedges using the remaining $9$ triples of $h$ that contains the vertex $v_1$. The following sets defines the $9$ different versions of set $A$ and set $B$ which can be used to construct all the remaining hyperedges that intersect exactly on $3$ vertices of $h$ containing the vertex $v_1$:\\

\noindent $A$ = $\{v_1, v_2, v_4\}$ and $B$ = $\{\{v_7, v_{10}, v_{12}\}, \{v_8, v_9, v_{11}\} \}$\\
$A$ = $\{v_1, v_2, v_5\}$ and $B$ = $\{\{v_7, v_8, v_{12}\}, \{v_9, v_{10}, v_{11}\} \}$\\
$A$ = $\{v_1, v_2, v_6\}$ and $B$ = $\{\{v_7, v_8, v_{11}\}, \{v_9, v_{10}, v_{12}\} \}$\\

\noindent $A$ = $\{v_1, v_3, v_4\}$ and $B$ = $\{\{v_7, v_9, v_{11}\}, \{v_8, v_{10}, v_{12}\} \}$\\
$A$ = $\{v_1, v_3, v_5\}$ and $B$ = $\{\{v_7, v_8, v_{10}\}, \{v_9, v_{11}, v_{12}\} \}$\\
$A$ = $\{v_1, v_3, v_6\}$ and $B$ = $\{\{v_7, v_8, v_9\}, \{v_{10}, v_{11}, v_{12}\} \}$\\

\noindent $A$ = $\{v_1, v_4, v_5\}$ and $B$ = $\{\{v_7, v_9, v_{10}\}, \{v_8, v_{11}, v_{12}\} \}$\\
$A$ = $\{v_1, v_4, v_6\}$ and $B$ = $\{\{v_7, v_{11}, v_{12}\}, \{v_8, v_9, v_{10}\} \}$\\
$A$ = $\{v_1, v_5, v_6\}$ and $B$ = $\{\{v_7, v_9, v_{12}\}, \{v_8, v_{10}, v_{11}\} \}$

Perform the Cartesian product of the corresponding sets $A$ and $B$, and add the hyperedges formed (along with their corresponding complementary hyperedges) into the hyperedge-set $E_{6}^{12}$. Note that, we have constructed a total of $40$ hyperedges in this step of our construction.\\

$\textbf{(b)}$ Here we take all the pairs ($2$-subsets) from the set $h$ that contains the vertex $v_1$ and add to it $4$ vertices from the set $h'$ in-order to form $6$-uniform hyperedges such that all the $5$-subsets formed from the addition of any $2$ vertices of $h$ (containing $v_1$) with $3$ vertices of $h'$ occur exactly once. Consider a $2$-subset $\{v_1, b\}$ of $h$. One can easily verify that in the last step (Step $3(a)$), during the formation of hyperedges, the pair $\{v_1, b\}$ is connected with $8$ different $3$-subsets (or triples) of $h'$ to form $8$ different $5$-subsets exactly once. The total number of triples in $h'$ is equal to ${6 \choose 3} = 20$. Therefore, we need to construct new hyperedges such that the remaining twelve $5$-subsets containing the pair $\{v_1, b\}$ and a triple from the set $h'$ are formed exactly once. 

For example, consider the pair $\{v_1, v_2\}$ of $h$. The $12$ remaining triples of $h'$ that are to be connected with $\{v_1, v_2\}$ to form hyperedges are $\{v_7, v_8, v_9\}$, $\{v_7, v_8, v_{10}\}$, $\{v_7, v_9, v_{10}\}$, $\{v_8, v_9, v_{10}\}$, $\{v_7, v_9, v_{11}\}$, $\{v_7, v_9, v_{12}\}$, $\{v_7, v_{11}, v_{12}\}$, $\{v_9, v_{11}, v_{12}\}$, $\{v_8, v_{10}, v_{11}\}$, $\{v_8, v_{10}, v_{12}\}$, $\{v_8, v_{11}, v_{12}\}$ and $\{v_{10}, v_{11}, v_{12}\}$. Note that, in Step $3(a)$ all the remaining triples of $h'$ have been added to the pair $\{v_1, v_2\}$ for the formation of hyperedges. Hence, we have $A$ = $\{v_1, v_2\}$ and $B$ = $\{\{v_7, v_8, v_9, v_{10}\}, \{v_7,$ $v_9, v_{11}, v_{12}\}, \{v_8, v_{10}, v_{11}, v_{12}\}\}$. Perform $A\times B$ and add the $3$ hyperedges formed (along with their corresponding complementary hyperedges) into $E_{6}^{12}$. In the same way, one can devise more hyperedges using the remaining $4$ pairs of $h$ that contains $v_1$. The following sets defines the $4$ different versions of set $A$ and set $B$ which can be used to construct all the remaining hyperedges that intersect exactly on $2$ vertices of $h$ containing the vertex $v_1$:\\

\noindent
$A$ = $\{v_1, v_3\}$ and $B$ = $\{\{v_7, v_8, v_{11}, v_{12}\}, \{v_8, v_9, v_{10}, v_{11}\}, \{v_7, v_9, v_{10}, v_{12}\}\}$\\
$A$ = $\{v_1, v_4\}$ and $B$ = $\{\{v_7, v_8, v_9, v_{12}\}, \{v_7, v_8, v_{10}, v_{11}\}, \{v_9, v_{10}, v_{11}, v_{12}\}\}$\\
$A$ = $\{v_1, v_5\}$ and $B$ = $\{\{v_7, v_8, v_9, v_{11}\}, \{v_8, v_9, v_{10}, v_{12}\}, \{v_7, v_{10}, v_{11}, v_{12}\}\}$\\
$A$ = $\{v_1, v_6\}$ and $B$ = $\{\{v_7, v_8, v_{10}, v_{12}\}, \{v_7, v_9, v_{10}, v_{11}\}, \{v_8, v_9, v_{11}, v_{12}\}\}$\\

Perform the Cartesian product of the corresponding $A$ and $B$, and add the hyperedges formed (along with their corresponding complementary hyperedges) into the hyperedge-set $E_{6}^{12}$. Note that, we have constructed a total of $30$ hyperedges in this step of our construction.

This completes the design of $S_{6}^{12}$. A total of $132$ hyperedges are present in its hyperedge-set
$E_{6}^{12}$, which are added through Step $(1)$ ($2$ hyperedges), Step $(2)$ ($60$ hyperedges) and Step $
(3)$ ($70$ hyperedges) of our construction. One can verify that the Steiner system we have devised here is isomorphic to the structure of $S(5,6,12)$ found in \cite{math} and \cite{curt}. Moreover, it is known that there exists a unique $S(5,6,12)$ Steiner system \cite{{col1}}.

\subsection{Constructing $S_{t}^{2t}$ when $t=4$}
We call the hypergraph corresponding to the Steiner system $S(3,4,8)$ as $S_{4}^{8}$. Similar to the previous section, we begin with an empty hyperedge-set of $S_{4}^{8}$, denoted by $E_{4}^{8}$, and the vertex-set of $S_{4}^{8}$ is denoted by $V_{4}^{8}$ = $\{v_1, v_2, v_3, \ldots, v_7, v_8\}$. Let us start devising the hyperedges for the hypergraph $S_{4}^{8}$:
\linebreak
\linebreak
\noindent $\textbf{Step (1)}$ Denote the hyperedge $\{v_1, v_2, v_3, v_4\}$ by $h$, and its
complementary hyperedge be $h'$. Note that $h'$ = $\{v_5, v_6, v_7, v_8\}$. We start the construction by adding the hyperedges $h$ and $h'$ into the hyperedge-set $E_{4}^{8}$.\\

\noindent $\textbf{Step (2)}$ Here we devise and add to $E_{4}^{8}$, all the hyperedges that intersect  exactly on $2$ vertices of $h$ containing a particular vertex, and $w.l.o.g.$ we choose $v_1$ to be that vertex. In addition, for every hyperedge added to $E_{4}^{8}$, we also add its corresponding complementary hyperedge. Now note the following sub-step:\\

$\textbf{Step (2.1)}$ If set $A$ = $\{1', x\}$ and set $B$ = $\{\{a, b\}, \{c, d\} \}$, then,\\

$\hspace{18mm}$ for set $A'$ = $\{1', y\}$, set $B'$ = $\{\{a, c\}, \{b, d\} \}$, and,

$\hspace{18mm}$ for set $A''$ = $\{1', z\}$, set $B''$ = $\{\{a, d\}, \{b, c\} \}$.\\ 

\noindent Note the following vertex-mapping:\\
$(1'\rightarrow v_1)$, $(x\rightarrow v_2)$, $(y\rightarrow v_3)$, $(z\rightarrow v_4)$, $(a\rightarrow v_5)$, $(b\rightarrow v_6)$, $(c\rightarrow v_7)$ and $(d\rightarrow v_8)$.\\\\
Apply the above step (Step $(2.1)$) on these mapped elements to get the following sets:\\
$A$ = $\{v_1, v_2\}$ and $B$ = $\{\{v_5, v_6\}, \{v_7, v_8\} \}$,\\
$A'$ = $\{v_1, v_3\}$ and $B'$ = $\{\{v_5, v_7\}, \{v_6, v_8\} \}$, and,\\
$A''$ = $\{v_1, v_4\}$ and $B''$ = $\{\{v_5, v_8\}, \{v_6, v_7\} \}$.\\

Perform the Cartesian products $A \times B$, $A' \times B'$ and $A'' \times B''$, and add the hyperedges thus formed (along with their corresponding complementary hyperedges) to $E_{4}^{8}$. 

This completes the design of $S_{4}^{8}$. A total of $14$ hyperedges are present in its hyperedge-set $E_{4}^{8}$, which are added in Step $(1)$ ($2$ hyperedges) and Step $(2)$ ($12$ hyperedges) of its construction.

\subsection{$2$-coloring properties of $S_{t}^{2t}$}

A hypergraph is said to be properly $2$-colorable if the vertices of the hypergraph can be colored using Red and Blue colors such that every hyperedge of the hypergraph contains Red as well as Blue colored vertices. The constructions defined makes it easier to study the $2$-coloring properties of the hypergraph $S_{t}^{2t}$ which is the Steiner system $S(t-1,t,2t)$.\\

\begin{lemma}
The hypergraphs $S_{6}^{12}$ and $S_{4}^{8}$ are properly $2$-colorable.
\end{lemma}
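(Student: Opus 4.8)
The plan is to exhibit an explicit proper $2$-coloring for each hypergraph and verify it against the structure produced by the constructions above. The key observation is that both $S_{6}^{12}$ and $S_{4}^{8}$ are built so that a hyperedge $h$ belongs to the hypergraph if and only if its complementary hyperedge $h' = V_t^{2t}\setminus h$ does; this complementation symmetry is precisely what makes a balanced coloring natural. So I would try the coloring that assigns Red to exactly half the vertices and Blue to the other half, and argue that no hyperedge is monochromatic because a monochromatic hyperedge would have to be one of the two ``halves,'' and those two halves can be chosen to be non-hyperedges.

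Concretely, for $S_{4}^{8}$ I would color $\{v_1,v_2,v_3,v_4\}$ Red and $\{v_5,v_6,v_7,v_8\}$ Blue --- wait, that makes $h$ itself monochromatic, so instead I would pick a balanced split that is not among the $14$ listed hyperedges, e.g.\ Red $=\{v_1,v_2,v_5,v_6\}$, Blue $=\{v_3,v_4,v_7,v_8\}$. Then I would go through the hyperedge set: $h$ and $h'$ each contain two Red and two Blue vertices; each hyperedge from Step $(2)$ has the form $\{v_1,v_i\}\cup\{v_j,v_k\}$ with $v_i\in h$, $\{v_j,v_k\}\subseteq h'$, and since $v_1$ is Red while the pairs from $h'$ used in the construction are spread across both color classes, one checks directly that each of the $12$ such hyperedges (and their complements) meets both classes. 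For $S_{6}^{12}$ I would similarly fix a balanced $6$/$6$ split avoiding all $132$ hyperedges --- for instance Red $=\{v_1,v_2,v_3,v_7,v_8,v_9\}$, Blue $=\{v_4,v_5,v_6,v_{10},v_{11},v_{12}\}$ --- and verify non-monochromaticity by the same case analysis over Steps $(1)$, $(2)$, $(3a)$, $(3b)$.

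The argument can be streamlined by exploiting the internal structure rather than brute force. In every hyperedge produced in Step $(2)$ or Step $(3)$ of the $t=6$ construction, the intersection with $h$ has size $2$, $3$, or $4$ (never $0$, $1$, $5$, or $6$, since intersection size $6$ is only $h$ itself and size $0$ is only $h'$), and symmetrically the intersection with $h'$ has size between $2$ and $4$. Thus if I choose the color classes so that each class contains exactly three vertices of $h$ and three of $h'$, then a hyperedge $e$ could be monochromatic only if $e\cap h$ lies entirely within one class's three $h$-vertices and $e\cap h'$ within the same class's three $h'$-vertices; I would argue that the specific pairings/triples/quadruples dictated by Steps $(2.1)$, $(2.2)$, $(3a)$, $(3b)$ never all fall on one side of this particular partition. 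For $S_4^8$ the analogous statement is that every non-$h$, non-$h'$ hyperedge meets $h$ in exactly two vertices and $h'$ in exactly two vertices, and the chosen split separates the relevant pairs.

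I expect the main obstacle to be organizing the verification for $S_{6}^{12}$ so that it is convincing without listing all $132$ hyperedges: the cleanest route is probably to note that it suffices to check the $66$ hyperedges containing $v_1$ (together with $h$), since the complement of any hyperedge not containing $v_1$ is one that does, and a hyperedge is monochromatic iff its complement is (the two color classes being swapped). Even so, one must confirm that across all the $A\times B$ products in Step $(2)$ and Step $(3)$ the chosen partition genuinely splits each block; a careful table of the intersection patterns of $\{v_1,v_2,v_3\}$-type triples with the pairs of $h'$, reconciled with the color classes $\{v_7,v_8,v_9\}$ versus $\{v_{10},v_{11},v_{12}\}$, should close this. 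Alternatively, and more safely, one may simply invoke that the constructed system is isomorphic to the unique $S(5,6,12)$, which is classically known to be $2$-colorable (indeed its chromatic number is $2$), and likewise for $S(3,4,8)$; I would present the explicit coloring as the primary proof and mention this as a cross-check.
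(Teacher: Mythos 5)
Your underlying idea is the same as the paper's: color one $t$-subset Red and its complement Blue; since every hyperedge has exactly $t$ vertices and each color class has exactly $t$ vertices, a monochromatic hyperedge would have to coincide with one of the two classes, and the complementation property of the construction guarantees that if the Red class is not a hyperedge then neither is the Blue class. The paper turns this into a complete two-line proof by simply taking $h_1$ to be \emph{any} $t$-subset that is not a hyperedge (such a set exists by counting: $14<\binom{8}{4}$ and $132<\binom{12}{6}$), with no case analysis whatsoever. Your plan, by contrast, insists on exhibiting a specific split and then verifying it hyperedge-by-hyperedge against Steps $(2)$, $(3a)$, $(3b)$; all of that work is superfluous once the observation in your first paragraph is stated precisely, and keeping it in the proof obscures the fact that nothing about the fine structure of the blocks is needed beyond complement-closure and the count of blocks.

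There is also a concrete error in your instantiation for $S_{4}^{8}$: the set $\{v_1,v_2,v_5,v_6\}$, which you offer as ``a balanced split that is not among the $14$ listed hyperedges,'' is in fact one of them --- it is the very first hyperedge produced in Step $(2)$, namely $A\cup\{v_5,v_6\}$ with $A=\{v_1,v_2\}$, $B=\{\{v_5,v_6\},\{v_7,v_8\}\}$. So the coloring Red $=\{v_1,v_2,v_5,v_6\}$, Blue $=\{v_3,v_4,v_7,v_8\}$ is monochromatic on that block (and on its complement). A correct choice is any $4$-set meeting $h=\{v_1,v_2,v_3,v_4\}$ in an odd number of vertices, e.g.\ $\{v_1,v_2,v_3,v_5\}$, since every block of $S_4^8$ meets $h$ in $0$, $2$ or $4$ vertices. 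Your $S_{6}^{12}$ example, Red $=\{v_1,v_2,v_3,v_7,v_8,v_9\}$, does happen to be a non-block (the only blocks meeting $h$ exactly in $\{v_1,v_2,v_3\}$ are $\{v_1,v_2,v_3,v_7,v_{10},v_{11}\}$ and $\{v_1,v_2,v_3,v_8,v_9,v_{12}\}$), but the fact that your $t=4$ candidate slipped through illustrates why the paper's existential choice of $h_1$ is the safer and shorter route; the citation of uniqueness of $S(5,6,12)$ as a fallback is a cross-check, not a proof, and is not needed.
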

\begin{proof}
It is sufficient for us to show the existence of a proper $2$-coloring $\chi$ for the hypergraph $S_{t}^{2t}$. Let $h_1$ be a $t$-subset of $V_{t}^{2t}$ which is not a hyperedge in $S_{t}^{2t}$. One can easily observe that such a $t$-subset does exist as all the $t$-subsets of $V_{t}^{2t}$ are not in $S_{t}^{2t}$. Color all the vertices in $h_1$ by Red and the remaining vertices, $V_{t}^{2t} \setminus h_1$, by Blue and denote this coloring as $\chi$. Note that, for $S_{t}^{2t}$, the only way to get a monochromatic hyperedge in $\chi$ is by the presence of the hyperedge $h_1$ or $V_{t}^{2t} \setminus h_1$. However, $h_1$ is absent in $S_{t}^{2t}$, and $V_{t}^{2t} \setminus h_1$ is the complementary hyperedge corresponding to $h_1$. According to our construction (Step $(i)$ in the overview), the complementary hyperedge of $h_1$ will be present in the hypergraph $S_{t}^{2t}$ if and only if $h_1$ is also present in it. Hence both $h_1$ and $V_{t}^{2t} \setminus h_1$ are absent in the hypergraph $S_{t}^{2t}$. In other words, the $2$-coloring $\chi$ is a proper $2$-coloring of $S_{t}^{2t}$.\qed
\end{proof}

\small

\end{document}